\newtheorem{theorem}{Theorem} 
\newtheorem*{theorem*}{Theorem}
\newtheorem{lemma}[theorem]{Lemma}
\newtheorem{definition}[theorem]{Definition}
\newtheorem{proposition}[theorem]{Proposition}
\theoremstyle{remark}
\newcommand{\GT}{\mathbb{GT}}
\newcommand{\GTC}{\widehat{\GT}}
\renewcommand{\H}{\mathcal H}
\renewcommand{\O}{\mathbb O}
\newcommand{\OO}{\mathcal O}
\newcommand{\E}{\mathbb E}
\renewcommand{\i}{\mathbf i}
\newcommand{\one}{\ensuremath{\mathbf 1}}
\newcommand{\zero}{\ensuremath{\mathbf 0}}
\newcommand{\mone}{\ensuremath{\mathbf {-1}}}
\title{From Alternating Sign Matrices to the Gaussian Unitary Ensemble}
\author{Vadim Gorin\thanks{Department of Mathematics, Massachusetts Institute of Technology, MA, USA and
 Institute for Information Transmission Problems of Russian Academy of Sciences,  Russia,
e-mail: vadicgor@gmail.com. The research was partially supported by by RFBR-CNRS grant
11-01-93105} }
\begin{document}

\maketitle

\begin{abstract}
The aim of this note is to prove that fluctuations of uniformly random alternating sign matrices
(equivalently, configurations of the six--vertex model with domain wall boundary conditions) near
the boundary are described by the Gaussian Unitary Ensemble and the GUE--corners process.
\end{abstract}

\section{Introduction}

\label{Section_intro_ASM}

An \emph{Alternating Sign Matrix} (ASM) of size $N$ is a $N\times N$ matrix whose entries are
either \zero, \one, or \mone, such that the sum along every row and column is $1$ and, moreover,
along each row and each column the nonzero entries alternate in sign, see Figure \ref{Fig_ASM} for
an example.

Since their introduction by Mills--Robbins--Rumsey \cite{MRR} ASMs attracted lots of attention
both in combinatorics and in mathematical physics. Enumerative properties of ASMs show their deep
connections with various classes of plane partitions and with a number of well-known lattice
models, see e.g.\ recent reviews in \cite{Z-Thesis}, \cite{Gier}, \cite[Introduction]{BFZ} and
references therein. Great interest to ASMs in statistical mechanics is related to the fact that
they are in bijection with configurations of the six-vertex model (or with square ice model) with
domain-wall boundary conditions as shown at Figure \ref{Fig_ASM}. A good review of the six--vertex
model can be found e.g.\ in the book \cite{Bax} by Baxter.

\begin{figure}[h]
\begin{multicols}{2}
\begin{center}
\Huge
$$
\begin{pmatrix}
  0& 0 & 0 &  1 & 0\\
  0& 1 & 0 & -1 & 1 \\
  0& 0 & 1 &  0 & 0\\
  1& 0 & 0 &  0 & 0\\
  0& 0 & 0 &  1 & 0
\end{pmatrix}
$$
\end{center}
\columnbreak
\begin{center}
 \scalebox{0.55}{\includegraphics{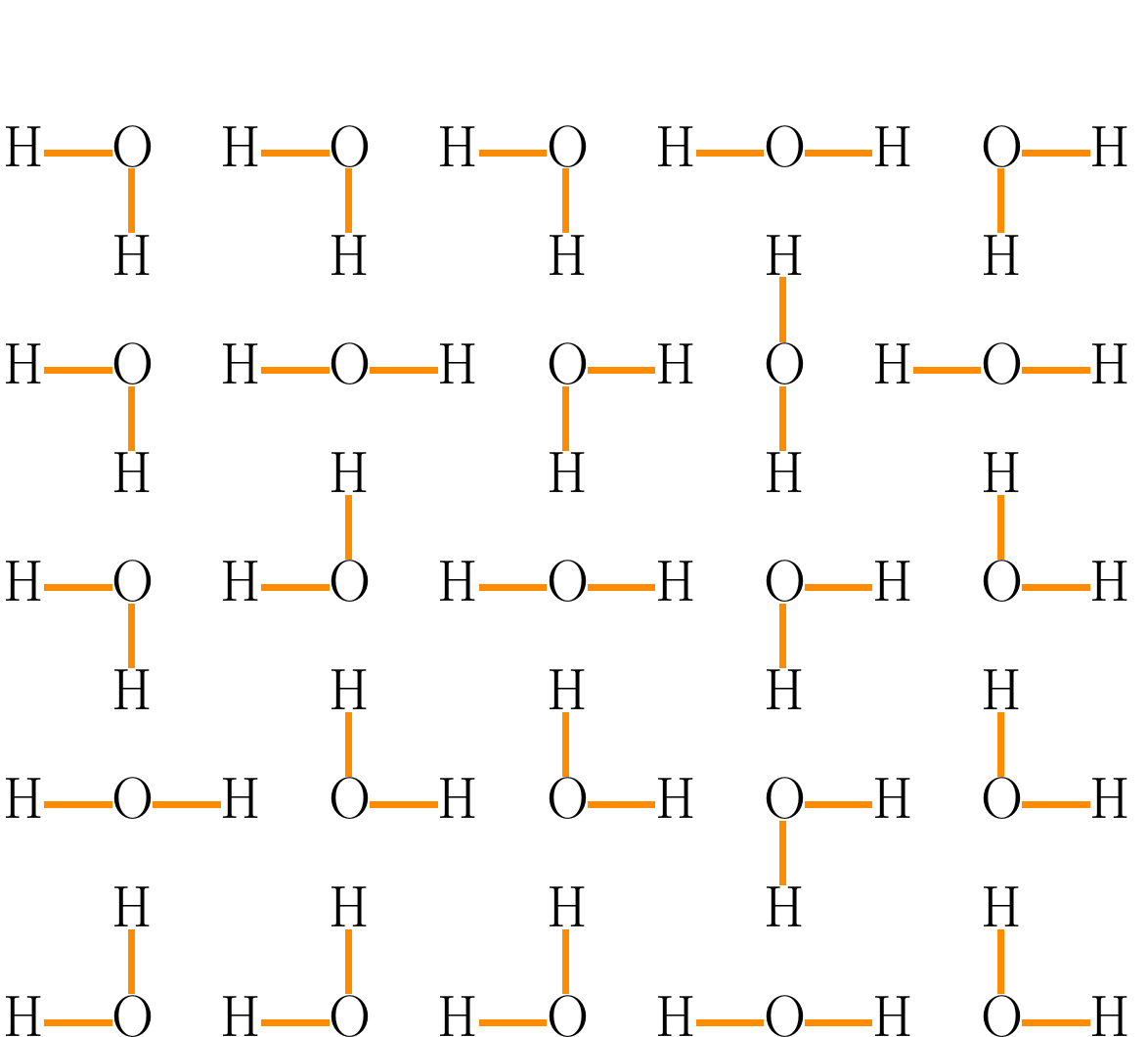}}
\end{center}
\end{multicols}
\caption{An alternating sign matrix of size 5 and the corresponding configuration of the
six--vertex  model (``square ice'')  with domain wall boundary condition. \one s in ASM correspond
to horizontal molecules H-O-H and \mone s to the vertical ones.\label{Fig_ASM}}
\end{figure}

\smallskip

Our interest in ASMs is probabilistic. We would like to know how a \emph{uniformly random} ASM of
size $N$ looks like when $N$ is large. The features of this model are believed to be similar to
the \emph{dimer models}, i.e.\ random lozenge tilings, plane partitions and domino tilings, cf.\
\cite{Kenyon} and also \cite{EKLP}, \cite{CLP}, \cite{J_nonintersecting}, \cite{KOS}, \cite{BGR},
\cite{Petrov-curves}. However, one of the key tools for studying the dimer models is the fact that
they can be described via \emph{determinantal point processes}. Such structure is not known for
uniformly random ASMs and one have to find different methods.

One of the (conjectural) features of uniformly random ASMs is the formation of the so--called
limit shape (also present in the dimer models), whose
 properties were studied by Colomo and Pronko
\cite{CP}; for the six--vertex model with more general boundary conditions the limit shape
phenomenon is discussed in \cite{PR}, \cite{R} (see also \cite{Z_shapes}). For ASMs the limit
shape theorem would claim, in particular, that when $N$ is large all non-zero matrix entries of a
uniformly random ASM of size $N$ lie with high probability inside a certain deterministic curve,
inscribed in $N\times N$ rectangle, see \cite{CP} for the details. As far as the author knows, the
exact form of this curve is still conjectural, but it closely matches the numeric simulations of
 \cite{AR}, \cite{SZ}.

Continuing the conjectural analogy with the dimer models, one expects various connections with
random matrices. In this article we study the asymptotic fluctuations of ASMs near the boundary of
the square and find such connection, which we now present.

Recall that the Gaussian Unitary Ensemble (GUE) of rank $N$ is the ensemble of random Hermitian
matrices $X=\{X_{ij}\}_{i,j=1}^N$ with probability density (proportional to) $\exp(-{\rm
Trace}(X^2))$ with respect to the Lebesgue measure. Let $\lambda^N_1\le
\lambda^N_2\le\dots\le\lambda^N_N$ denote the eigenvalues of $X$ and, more generally, for $1\le
k\le N$ let $\lambda^k_1\le \lambda^k_2\le\dots\le\lambda^k_k$ denote the eigenvalues of top--left
$k\times k$ corner $\{X_{ij}\}_{i,j=1}^k$ of $X$. The joint distribution of $\lambda_i^j$,
$i=1,\dots,j$, $j=1,\dots,N$ is known as the \emph{GUE--corners} process of rank $N$ (the name
GUE--minors process is also used, cf.\ \cite{JN}). The following theorem is the main result of the
present article.

\begin{theorem}
\label{theorem_ASM}
 Fix any $k$. \begin{enumerate}
 \item
 As $N\to\infty$ the probability that the number of \mone s in the first $k$ rows of a
 uniformly random ASM of size $N$ is maximal possible (i.e.\ there is one \mone\, in the second row, two \mone s in the third row, etc)  tends to
 $1$, and, thus, there are $k(k-1)/2$ interlacing \one s in the first $k$ rows with high probability.
 \item
 Let $\eta(N)^i_j$,
 $i=1,\dots,j$, $j=1,\dots,k$ denote the column number of the $i^{th}$ \one\, in the $j^{th}$ row of the uniformly
 random ASM, where we agree that $\eta(N)^i_j=+\infty$ if there are less than $i$ \one s in the $j^{th}$
 row. Then the random vector
\begin{equation}
\label{eq_scaled_coordinates}
 \sqrt{\frac{8}{3N}} \Big(\eta(N)^i_j-N/2\Big)
\end{equation}
 weakly converges to the
 GUE--corners process as $N\to\infty$.
\end{enumerate}
\end{theorem}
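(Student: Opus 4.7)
My plan is to pass through the classical bijection between ASMs and \emph{monotone triangles}: integer arrays $(\mu^i_j)_{1 \le i \le j \le N}$ with strictly increasing rows, weakly interlacing consecutive rows $\mu^i_{j+1}\le \mu^i_j\le \mu^{i+1}_{j+1}$, and bottom row $(1,\ldots,N)$. Under this bijection, the $+1$s in the $j$-th row of the ASM sit at the columns of row $j$ of the triangle that do not appear in row $j-1$, and the $-1$s at the columns of row $j-1$ not in row $j$. In particular the number of $-1$s in the first $k$ rows is maximal exactly when all interlacing inequalities between the top $k$ triangle rows are strict, and on this event $\eta^i_j = \mu^i_j$. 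The theorem therefore reduces to a statement about the joint law of the top $k$ rows $T_1,\dots,T_k$ of a uniformly random monotone triangle of size $N$.

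The structural fact I would exploit is a factorization: the number of monotone triangles of size $N$ with prescribed top $k$ rows depends only on the $k$-th row $T_k$, since the ``upper'' and ``lower'' portions of the triangle are independent given their common row. Consequently, under the uniform measure, conditional on $T_k$ the upper $k-1$ rows are \emph{uniformly} distributed on interlacing integer sequences above $T_k$. This is exactly the discrete analogue of Baryshnikov's conditional description of the GUE--corners process: conditional on the eigenvalues of the $k\times k$ corner, the eigenvalues of smaller corners are uniform on the corresponding Gelfand--Tsetlin polytope. Under the scaling $\sqrt{8/(3N)}\,(\,\cdot\,-N/2)$ the discrete uniform measure converges to the continuous one, so the conditional distributions match in the limit. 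It therefore remains to prove that the scaled marginal law of $T_k$ converges to the eigenvalue distribution of a $k\times k$ GUE, whose density is proportional to $\prod_{i<j}(\lambda_j-\lambda_i)^2\exp(-\sum\lambda_i^2)$.

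This marginal factors as $P(T_k)\propto U(T_k)\cdot M(T_k)/A_N$, where $U(T_k)$ counts interlacing arrays above $T_k$---a classical lattice-point count that in the bulk scaling produces one factor $\prod_{i<j}(\mu^j_k-\mu^i_k)$---and $M(T_k)$ is the number of trapezoidal completions of $T_k$ down to $(1,\ldots,N)$. For $k=1$ the asymptotics of $M(T_1)/A_N$ follow from Zeilberger's refined ASM enumeration, giving the Gaussian $N(N/2, 3N/8)$ limit for the position of the single $+1$ in the top row. The main technical obstacle of the proof is the general $k$ case: producing the bulk-scale asymptotics of the multiply refined count $M(T_k)/A_N$ for $T_k$ at scale $N/2+O(\sqrt{N})$, and verifying that the combined factor $U\cdot M/A_N$ yields the second Vandermonde and the Gaussian factor of the $k\times k$ GUE density. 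I would attack this either via the Izergin--Korepin determinantal partition function of the six-vertex model (specialized and analysed by steepest descent) or, more appealingly, by an inductive reduction to $k=1$ combining the conditional uniformity above with a consistency argument. Once Part~2 is established, Part~1 follows automatically because the limiting GUE density has no atoms on coincidences, so strict interlacing holds almost surely in the limit.
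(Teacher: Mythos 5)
Your combinatorial setup coincides exactly with the paper's starting point: the bijection with monotone triangles (the paper's half-strict Gelfand--Tsetlin patterns, Lemma \ref{lemma_ASM_and_pattern}), the conditional uniformity of the top $k-1$ rows given $T_k$ (the Gibbs property, Definition \ref{def_discrete_gibbs}), the identification of $\eta(N)^i_j$ with the triangle entries on the event of strict interlacing, and the derivation of Part 1 from Part 2 via the absence of coincidences in the GUE limit. The genuine gap lies in the analytic core of your plan, which you flag as ``the main technical obstacle'' but do not close: the asymptotics of the marginal $P(T_k)\propto U(T_k)\,M(T_k)/A_N$ for $k\ge 2$. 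Here $M(T_k)$ is the number of monotone trapezoids with prescribed top row $T_k$ and bottom row $(1,\dots,N)$, for which no tractable closed form or asymptotic analysis exists: the known refined enumerations (Zeilberger's for one refinement, Stroganov's and Colomo--Pronko's doubly refined formulas) control boundary data such as the position of the \one\ in the first row or column, not an entire interior row at scale $N/2+O(\sqrt N)$; and extracting $k$-fold refined asymptotics from the Izergin--Korepin determinant by steepest descent is precisely the hard unsolved analysis (the computations in the appendices of \cite{CP2}, \cite{CPZ} handle one or two refinements only). Your fallback, an ``inductive reduction to $k=1$ with a consistency argument,'' is logically insufficient as stated: the Gibbs property fixes only the conditional law of rows $1,\dots,k-1$ given $T_k$, and the Gaussian marginal of the single first-row entry does not determine the law of $T_k$ --- many distinct $k$-row marginals average out to the same first-row law, so Gibbs plus the $k=1$ case does not pin down the limit.

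The paper circumvents the refined-enumeration problem entirely, and the contrast shows exactly what your outline is missing. It (i) proves tightness of the scaled rows by a self-contained combinatorial estimate (Lemma \ref{lemma_tightness} and Proposition \ref{proposition_tightness}), so that subsequential limits exist and inherit the continuous Gibbs property; (ii) imports Theorem \ref{Theorem_ASM_GP} from \cite{GP}, which gives the \emph{joint} convergence of all row-sum increments $\Psi_j(N)=|\mu^j|-|\mu^{j-1}|$, centered and scaled, to i.i.d.\ standard Gaussians; and (iii) proves a rigidity statement (Proposition \ref{proposition_characterization_of_GUE}): a Gibbs measure on continuous patterns whose increments $|\mu^j|-|\mu^{j-1}|$ are i.i.d.\ $N(0,1)$ must be the GUE--corners process, established by constructing a unitarily invariant random Hermitian matrix whose diagonal entries are these increments and computing its Fourier transform. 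The decisive input your proposal lacks is (ii): some external limit theorem supplying enough one-dimensional marginal information (here, all row sums jointly, not just the $k=1$ marginal) to make the Gibbs property rigid. Without that, or without genuinely solving the multiply refined enumeration asymptotics, your argument does not yield the theorem.
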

\noindent {\bf Remark.} Symmetries of uniformly random ASMs imply an analogue of Theorem
\ref{theorem_ASM} for the last $k$ rows, first $k$ columns and last $k$ columns of ASM. It is very
plausible that the four limiting GUE--corners processes are jointly independent.

\medskip

A number of results similar to Theorem \ref{theorem_ASM} for models of random Young diagrams and
random tilings related to the determinantal point processes is known, see \cite{Bar}, \cite{JN},
\cite{OR}, \cite{Nord}, \cite{GS}, \cite{GP}. Moreover, for random lozenge tilings the
GUE--corners process is believed to be the \emph{universal scaling limit} near an edge of the
boundary of the tiled domain, cf.\ \cite{OR}, \cite{JN}, \cite{GP}. Interestingly, the number of
ASMs is the same as the number of lozenge tilings of a hexagon with certain symmetries (see e.g.\
\cite{BP} and references therein.) However, this fact remains quite mysterious and no bijective
proof of it is known; Theorem \ref{theorem_ASM}, thus, gives another indication that direct
combinatorial connection between ASMs and lozenge tilings should exist.

\smallskip

Theorem \ref{theorem_ASM} was conjectured in \cite{GP}, in the same paper a partial result towards
Theorem \ref{theorem_ASM} was proved. Our argument relies on this result, so let us present it.

Let $\Psi_k(N)$ denote the sum of coordinates of \one s minus the sum of coordinates of \mone s in
the $k^{th}$ row of the uniformly random ASM of size $N$. In \cite{GP} it is proved that the
centered and rescaled random variables $\Psi_k(N)$ converge to the collection of i.i.d.\ Gaussian
random variables as $N\to\infty$.

\begin{theorem}[Theorem 1.10 in \cite{GP}]
\label{Theorem_ASM_GP}
 For any fixed $k$ the random variable $\sqrt{\frac83} \frac{\Psi_k(N)-N/2}{\sqrt{N}}$ weakly converges to the standard normal
 random variable $N(0,1)$. Moreover, the joint distribution of any collection of such variables converges to the
 distribution of independent standard normal random variables.
 \end{theorem}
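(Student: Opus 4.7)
My plan is to reduce Theorem \ref{Theorem_ASM_GP} to a joint CLT for row sums of the monotone triangle associated with a uniformly random ASM, and then attack that joint CLT via Schur generating function (SGF) asymptotics.

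First, reformulation. Under the standard bijection, ASMs of size $N$ correspond to monotone triangles with bottom row $(1,2,\ldots,N)$: the $k$-th row $m^k_1<m^k_2<\cdots<m^k_k$ records the positions of the $1$'s in the column partial sum of the first $k$ ASM rows. A direct comparison gives
$$\Psi_k(N) \;=\; \sum_{i=1}^{k} m^k_i \;-\; \sum_{j=1}^{k-1} m^{k-1}_j,$$
so it suffices to prove a joint CLT for the row sums $\sigma_k(N)=\sum_i m^k_i$, $k=1,\ldots,K$. Uniformity of the ASM passes to uniformity of the monotone triangle, so the joint law of $(m^1,\ldots,m^K)$ is proportional to the number of completions to a full triangle of size $N$ with the prescribed bottom row.

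Second, I would compute the joint SGF $F_N(x_1,\ldots,x_K)$ of the top $K$ rows, equivalently a joint moment generating function for the $\sigma_k$'s, in a form amenable to asymptotic analysis. For this I would reach for six-vertex model machinery: the Izergin--Korepin determinantal formula for the partition function with inserted spectral parameters at the top $K$ rows produces an integrable expression for the completion count. An alternative route exploits the interlacing hierarchy: conditional on the single row $m^{K+1}$, the top $K$ rows follow a tractable Markovian / Schur-like measure, and one can marginalise against the distribution of $m^{K+1}$, which is more accessible through refined enumeration formulas for ASMs.

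Third, extraction of Gaussianity from $F_N$. In the scaling regime $x_j=\exp(u_j/\sqrt N)$ one expects $\log F_N$ to converge to a quadratic form $Q(u_1,\ldots,u_K)$; the general SGF central limit theorem then yields joint Gaussianity of the centered and rescaled $\sigma_k$'s, with covariance determined by the Hessian of $Q$. Differencing supplies the limit of $\Psi_k(N)$: the mean computation should give the centering $N/2$ and the variance $3/8$, while the claimed independence across distinct $k$ corresponds to the vanishing of off-diagonal entries of the resulting covariance matrix in the differenced basis.

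The main obstacle is the absence of any intrinsic determinantal or free-fermion structure for uniformly random ASMs: the completion count that defines the joint measure on top rows is not a Schur function or an immediate Karlin--McGregor determinant, so turning it into a function tractable for steepest-descent analysis is the core of the argument and must use the full strength of the six-vertex algebra (Izergin--Korepin, Yang--Baxter, Kuperberg's techniques). Moreover, the independence of the different $\Psi_k(N)$ is not visible from the model; it must emerge as an algebraic cancellation in the explicit form of $Q$, so the bookkeeping of the covariance matrix is itself a nontrivial step rather than a conceptual consequence.
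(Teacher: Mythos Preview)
The present paper does not contain a proof of this statement at all: Theorem~\ref{Theorem_ASM_GP} is quoted from \cite{GP} and used as a black-box input to the proof of Theorem~\ref{theorem_ASM}. So there is no ``paper's own proof'' to compare against here; the relevant comparison is with \cite{GP}.

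That said, your outline is broadly in the spirit of what \cite{GP} actually does. The reformulation $\Psi_k(N)=|\mu^k|-|\mu^{k-1}|$ via monotone triangles is exactly the one used (and repeated in Section~\ref{Section_Gelfand_Tsetlin_patterns} of this paper). The engine in \cite{GP} is indeed an asymptotic analysis of multivariate generating functions built from symmetric polynomials: one expresses the relevant observables through normalized Schur polynomials $s_\lambda(x_1,\dots,x_k,1,\dots,1)/s_\lambda(1,\dots,1)$ (and close relatives coming from the six-vertex partition function at the combinatorial point), proves a general asymptotic expansion for such normalized characters when most variables are set to $1$, and reads off the quadratic limit that yields joint Gaussianity with the stated mean $N/2$ and variance $3/8$. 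The independence of the $\Psi_k$'s does emerge, as you anticipate, from the explicit covariance computation rather than from any a priori mechanism.

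Where your sketch is thinner than \cite{GP} is precisely the step you flag as the main obstacle. You propose to get the joint generating function either from Izergin--Korepin with inserted spectral parameters or from refined ASM enumeration of a single row plus the Gibbs/Markov structure. The second route is essentially what \cite{GP} implements: the Gibbs property reduces the problem to controlling the marginal on a single level, and that marginal is handled through an explicit symmetric-function identity (not a raw steepest-descent on the Izergin--Korepin determinant). Turning this into an honest proof requires the concrete identity linking the ASM level distribution to a Schur-type evaluation and the uniform asymptotics of that evaluation; your proposal names these ingredients but does not supply them, so as written it is a correct plan rather than a proof.
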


We believe (but we do not have a proof) that an analogue of Theorem \ref{theorem_ASM} should hold
 for more general measures on ASMs. A natural class of measures can be obtained through the
 correspondence with six--vertex model. In the latter model one typically subdivides 6 types of
 vertices into 3 groups and assigns weights $a$, $b$, $c$ to these three groups. The
 probability of a configuration is further set to be proportional to the product of the weights of
 its vertices. For instance, these are the settings of the celebrated Izergin--Korepin formula
\cite{I}, \cite{Kor} for the partition function of the six--vertex model with domain wall boundary
 conditions. Asymptotics of this partition function in the limit regime which is somewhat similar to the one used in
  arguments of \cite{GP} (leading to Theorem \ref{Theorem_ASM_GP}) was also investigated in
  \cite[Appendix B]{CP2}, \cite[Appendix]{CPZ}.

 For one particular choice of the parameters $a$, $b$ and $c$ known as  ``the free fermion point'' of
 the six--vertex model an analogue of Theorem \ref{theorem_ASM} follows from the results of \cite{JN}. In terms
 of the ASMs this choice of weights corresponds to assigning the probability proportional to
 $2^{n_1}$ to an alternating sign matrix with $n_1$ \one s. This case is closely related to uniformly random domino
 tilings of the Aztec diamond (as is explained in \cite{EKLP}, \cite{FS}), to Schur measures (see \cite{BG} for a recent review)
 and to determinantal point processes, which makes it somewhat simpler.

 \medskip

 In the rest of the article we provide a proof of Theorem \ref{theorem_ASM}, which is organized as follows. In Section
 \ref{Section_Gelfand_Tsetlin_patterns} we study various classes of Gelfand--Tsetlin patterns and
 Gibbs measures on them. In Section \ref{Section_tightness} we prove that the distribution of random vector
 \eqref{eq_scaled_coordinates} is tight as $N\to\infty$. In Section \ref{Section_proof} we combine
 all the obtained results to finish the proof.

\section{Gibbs Measures on Gelfand--Tsetlin patterns}

\label{Section_Gelfand_Tsetlin_patterns}

\subsection{Half-Strict Gelfand--Tsetlin patterns}

Let $\GT_N$ denote the set of $N$--tuples of \emph{distinct} integers:
\begin{equation}
\label{eq_strict_pattern}
 \GT_N=\{\lambda\in\mathbb{Z}^N\mid \lambda_1<\lambda_2<\dots<\lambda_N\}.
\end{equation}
We say that $\lambda\in\GT_N$ and $\mu\in\GT_{N-1}$ \emph{interlace} and write $\mu\prec\lambda$ if
\begin{equation}
\label{eq_interlace}
 \lambda_1\le \mu_1 \le \lambda_2 \le\dots\le \mu_{N-1}\le\lambda_N.
\end{equation}
Note that the inequalities in \eqref{eq_strict_pattern} are strict, while in \eqref{eq_interlace}
they are weak.

Let $\GT^{(N)}$ denote the set of sequences
$$
 \mu^1\prec\mu^2\prec\dots\prec\mu^N,\quad\quad \mu^i\in\GT_i,\, 1\le i\le N,\quad
 \mu^i\prec\mu^{i+1},\, 1\le i<N.
$$
We call the elements of $\GT^{(N)}$ \emph{half--strict} Gelfand--Tsetlin patterns\footnote{The
name comes from the fact that an analogous object when all the inequalities are not strict is
closely related to the representations of unitary groups and Gelfand--Tsetlin basis in such
irreducible representations} (they are also known as monotonous triangles, cf.\ \cite{MRR}).

 For $\lambda\in\GT_N$, let $\GT^{(N)}_\lambda\subset \GT^{(N)}$ denote the set of half--strict
 Gelfand--Tsetlin patterns $\mu^1\prec\dots\prec\mu^N$ such that $\mu^N=\lambda$.

\begin{lemma} \label{lemma_ASM_and_pattern} The set of ASMs of size $N$ is in bijection with $\GT^{(N)}_{1<2<\dots<N}$. The
bijection is given by
$$
 ASM=(r^1,\dots,r^N)\,\longmapsto\, \mu^1\prec\mu^2\prec\dots\prec\mu^N\in \GT^{(N)}_{1<2<\dots<N},
$$
where $\mu^k$ encodes the column numbers of \one s in the sum of the first $k$ rows
$r^1+\dots+r^k$ of an ASM.
\end{lemma}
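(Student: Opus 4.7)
The plan is to show that the map is well-defined (i.e.\ the image is indeed a half-strict pattern with top row $(1,2,\dots,N)$), and then construct an explicit inverse.

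First I would observe the basic reason why partial row sums are $\{0,1\}$-valued. Set $s^k = r^1+\dots+r^k$. Reading down any column of the ASM, the nonzero entries alternate starting with $+1$ and summing to $1$, so every partial sum of a column lies in $\{0,1\}$. Hence $s^k \in \{0,1\}^N$. Since each $r^i$ has row sum $1$, the vector $s^k$ has exactly $k$ entries equal to $1$; let $\mu^k_1 < \mu^k_2 < \dots < \mu^k_k$ be their column indices, so that $\mu^k \in \GT_k$. The boundary condition $\mu^N = (1,2,\dots,N)$ follows from the column sums of the ASM being $1$.

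Next I would verify the interlacing $\mu^k \prec \mu^{k+1}$. A direct rewriting shows that $\mu^k\prec\mu^{k+1}$ is equivalent to the inequality
\begin{equation*}
 0 \le \#\{i : \mu^{k+1}_i \le j\} - \#\{i : \mu^k_i \le j\} \le 1 \quad\text{for every } j = 0, 1, \dots, N.
\end{equation*}
The left-hand side equals $\sum_{c=1}^{j} (s^{k+1}_c - s^k_c) = \sum_{c=1}^{j} r^{k+1}_c$, the prefix row-sum of row $k+1$. Since the nonzero entries of row $k+1$ alternate starting and ending with $+1$ (and sum to $1$), every such prefix sum lies in $\{0,1\}$, which is exactly the needed inequality. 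Thus the map lands in $\GT^{(N)}_{1<2<\dots<N}$.

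For the inverse, given $\mu^1 \prec \dots \prec \mu^N = (1,2,\dots,N)$, let $s^k \in \{0,1\}^N$ be the indicator vector of $\mu^k$ (with $s^0 = 0$) and set $r^k = s^k - s^{k-1}$. Entries lie in $\{-1,0,1\}$ because $s^{k-1},s^k \in \{0,1\}^N$; the row sum of $r^k$ is $k - (k-1) = 1$; and the alternation of nonzero entries along row $k$ starting and ending with $+1$ is forced by the fact (equivalent to $\mu^{k-1}\prec\mu^k$, read backwards through the equivalence in the previous paragraph) that the prefix sums of $r^k$ take values only in $\{0,1\}$. The column sums of the assembled matrix equal the entries of $s^N = (1,\dots,1)$, and the column-wise alternation follows by exactly the same argument applied to the column of partial sums $s^k_c \in \{0,1\}$. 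The two constructions are manifestly inverse to each other.

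The only slightly nontrivial point is the elementary lemma that a vector in $\{-1,0,1\}^N$ whose prefix sums all lie in $\{0,1\}$ and whose total sum is $1$ has alternating nonzero entries starting and ending with $+1$; I would dispatch this in one sentence, as two consecutive $+1$s would push a prefix sum to $2$ and a leading $-1$ would push one to $-1$. This is really the only step where the combinatorial content lives, and it is what makes the correspondence with alternating sign matrices work in both directions.
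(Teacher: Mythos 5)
Your proof is correct, and it follows the same route the paper intends: the paper's own proof of this lemma is literally the single sentence ``This is straightforward, see also \cite{MRR}'', with the bijection via partial row sums $r^1+\dots+r^k$ already specified in the statement, so there is nothing to diverge from. Your write-up simply supplies the details the paper omits --- the key one being the equivalence of interlacing with the condition that the prefix sums of each row lie in $\{0,1\}$, together with the elementary observation that a $\{-1,0,1\}$-vector with prefix sums in $\{0,1\}$ and total sum $1$ has alternating signs starting and ending with $+1$ --- and these details are accurate in both directions of the bijection.
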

\begin{proof} This is straightforward, see also \cite{MRR}.
\end{proof}
 Under the above identification, the random variables $\Psi_k(N)$ of Theorem
\ref{Theorem_ASM_GP} turn into the differences
$$
 \Psi_k(N)=|\mu^k|-|\mu^{k-1}|,
$$
where $|\mu^k|$ is the sum of coordinates $\mu_1^k+\dots+\mu_k^k$ of $\mu^k\in\GT_N$, and
$\mu^1\prec\mu^2\prec\dots\prec\mu^N$ is the uniformly random element of
$\GT^{(N)}_{1<2<\dots<N}$.

\begin{definition} \label{def_discrete_gibbs} A probability measure $\rho$ on $\GT^{(k)}$ is called \emph{Gibbs measure} if
for  any $\lambda\in\GT_k$, the restriction of $\rho$ on $\GT^{(k)}_\lambda$ is proportional to
the uniform distribution on $\GT^{(k)}_\lambda$:
$$
 \rho\Big|_{\GT^{(k)}_\lambda} = \rho_k(\lambda) \cdot \text{Uniform measure on } \GT^{(k)}_\lambda,
$$
where $\rho_k(\cdot)$ is the projection of $\rho$ on $\GT_k$.
\end{definition}
Clearly, if $\mu^1\prec\mu^2\prec\dots\prec\mu^N\in \GT^{(N)}_{1<2<\dots<N}$ corresponds to
uniformly random ASM as in Lemma \ref{lemma_ASM_and_pattern}, then for any $1\le k\le N$, the
distribution of $\mu^1\prec\mu^2\prec\dots\prec\mu^k$ is a Gibbs measure on $\GT^{(k)}$.

\subsection{Continuous Gibbs property}

Let us introduce a continuous analogue of the set of half-strict Gelfand--Tsetlin patterns
$\GT^{(N)}$.

Let $\GTC_N$ denote the set of $N$--tuples of reals:
\begin{equation}
\label{eq_continuous_pattern}
 \GTC_N=\{\lambda\in\mathbb{R}^N\mid \lambda_1\le \lambda_2\le\dots\le\lambda_N\}.
\end{equation}
We say that $\lambda\in\GTC_N$ and $\mu\in\GTC_{N-1}$ \emph{interlace} and write $\mu\prec\lambda$
if
\begin{equation}
\label{eq_continuous_interlace}
 \lambda_1\le \mu_1 \le \lambda_2 \le\dots\le \mu_{N-1}\le\lambda_N.
\end{equation}
Let $\GTC^{(N)}$ denote the set of sequences
$$
 \mu^1\prec\mu^2\prec\dots\prec\mu^N,\quad\quad \mu^i\in\GT_i,\, 1\le i\le N,\quad
 \mu^i\prec\mu^{i+1},\, 1\le i<N.
$$
We call the elements of $\GTC^{(N)}$ \emph{continuous} Gelfand--Tsetlin patterns.

 For $\lambda\in\GTC_N$, let $\GTC^{(N)}_\lambda\subset \GTC^{(N)}$ denote the set of continuous
 Gelfand--Tsetlin patterns $\mu_1\prec\dots\prec\mu_N$ such that $\mu_N=\lambda$.

The following definition is a straightforward analogue of Definition \ref{def_discrete_gibbs}.
\begin{definition}
\label{def_cont_gibbs} A probability measure $\rho$ on $\GTC^{(k)}$ is called \emph{Gibbs measure}
if for any $\lambda\in\GTC_N$, the conditional distribution of $\rho$, given that $\mu^N=\lambda$
is the uniform distribution on on $\GTC^{(k)}_\lambda$, i.e.
$$
 \rho(\cdot\mid \mu^k=\lambda) = \text{Uniform measure on } \GTC^{(k)}_\lambda.
$$
\end{definition}

For $N$--tuple
$\lambda=(\lambda_1\le\lambda_2\le\dots\le\lambda_N)\in\GTC_N$ set
$$
|\lambda|=\lambda_1+\lambda_2+\dots+\lambda_N.
$$

\begin{proposition}
\label{proposition_characterization_of_GUE}
 Let $\rho$ be a Gibbs measure on $\GTC^{(N)}$ and let
 $\mu^1\prec\mu^2\prec\dots\prec\mu^N$ be $\rho$--distributed random
 element of $\GTC^{(N)}$. Suppose that
 $$
  |\mu^1|,\, |\mu^2|-|\mu^1|,\, |\mu^3|-|\mu^2|,\, \dots,
  |\mu^N|-|\mu^{N-1}|
 $$
 is a Gaussian vector with i.i.d.\ $N(0,1)$--distributed components.
 Then $\rho$ is the GUE--corners process of rank $N$.
\end{proposition}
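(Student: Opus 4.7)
The plan is to reduce via the Gibbs property to identifying the density of the top row $\mu^N$, and then to pin down this density by induction on $N$ combined with the Gaussian hypothesis.

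By Definition \ref{def_cont_gibbs}, the joint density of $\rho$ on the interlacing polytope factors as $f_N(\mu^N)/V_N(\mu^N)$, where $f_N$ is the density of $\mu^N$ and $V_N(\lambda)=\Delta(\lambda)/\prod_{j=0}^{N-1}j!$ is the volume of $\GTC^{(N)}_\lambda$ (with $\Delta(\lambda)=\prod_{i<j}(\lambda_j-\lambda_i)$). Hence $\rho$ is entirely determined by $f_N$, and it suffices to show that $f_N$ equals the rank-$N$ GUE eigenvalue density. I would argue by induction on $N$: the base case $N=1$ is immediate since $\mu^1=|\mu^1|\sim N(0,1)$. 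For the inductive step, the push-forward of $\rho$ to $\GTC^{(N-1)}$ (forgetting $\mu^N$) remains Gibbs, because a uniform conditional distribution on $\GTC^{(N)}_{\mu^N}$ induces a uniform conditional distribution on $\GTC^{(N-1)}_{\mu^{N-1}}$; and the first $N-1$ increments $|\mu^k|-|\mu^{k-1}|$ are still i.i.d.\ $N(0,1)$, so the induction hypothesis identifies this push-forward with the rank-$(N-1)$ GUE corners process. In particular $f_{N-1}$ is the rank-$(N-1)$ GUE density.

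To pin down $f_N$, I would combine the marginalization identity
$$f_{N-1}(\mu)=V_{N-1}(\mu)\int_{\mu\prec\lambda}\frac{f_N(\lambda)}{V_N(\lambda)}\,d\lambda$$
with the joint Laplace transform of $(|\mu^1|,\dots,|\mu^N|)$. The Gaussian hypothesis gives
$$\Phi(u_1,\dots,u_N):=\E\!\left[\exp\!\Big(\sum_{k=1}^N u_k|\mu^k|\Big)\right]=\exp\!\Big(\tfrac12\sum_{k=1}^{N}(u_k+\cdots+u_N)^2\Big),$$
while the Gibbs structure expresses $\Phi(u)$ as $\int f_N(\lambda)\,e^{u_N|\lambda|}\,K(\lambda;u_1,\dots,u_{N-1})\,d\lambda$ for an explicit kernel $K$ built by iterating conditional uniform averages over successive GT polytopes. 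Equating these expressions for all $u$, and using that the rank-$N$ GUE density is one solution (by Baryshnikov's theorem on GUE minors, under which the diagonal entries $X_{kk}=|\mu^k|-|\mu^{k-1}|$ are i.i.d.\ Gaussian), then gives $f_N$.

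The main obstacle is the uniqueness step: showing that the family of kernels $\{e^{u_N|\lambda|}K(\lambda;u_1,\dots,u_{N-1})\}_{u\in\mathbb{R}^N}$ separates densities on $\GTC_N$. I would try to reduce this to a one-dimensional Fourier-inversion statement in the single remaining direction $|\mu^N|-|\mu^{N-1}|$, using the induction hypothesis to fix the other degrees of freedom through the marginalization identity above and through the known distribution of $\mu^{N-1}$. Carrying out this reduction, and handling the explicit form of $K$ that it requires, is where I expect the real technical work to lie.
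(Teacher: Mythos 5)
Your setup is correct as far as it goes: the factorization of a Gibbs density as $f_N(\mu^N)/V_N(\mu^N)$ with $V_N(\lambda)=\Delta(\lambda)/\prod_{j=0}^{N-1}j!$, the fact that the push-forward to $\GTC^{(N-1)}$ is again Gibbs, the marginalization identity, and the Laplace transform of the row sums are all right, and the induction correctly reduces everything to identifying $f_N$. But the proposal stops exactly where the proposition's entire content lies. Existence of a solution (the GUE density, via Baryshnikov) is not at issue --- the statement is a \emph{uniqueness} claim, and the injectivity of the map from $f_N$ to the data you use (the kernels $e^{u_N|\lambda|}K(\lambda;u_1,\dots,u_{N-1})$ separating densities on $\GTC_N$) is precisely what you leave unproven. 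So what you have is a correct reformulation plus an open problem, not a proof.

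Moreover, the suggested reduction to a one-dimensional Fourier inversion looks misjudged for two reasons. First, the hypothesis constrains the last increment $|\mu^N|-|\mu^{N-1}|$ only conditionally on the $\sigma$-algebra of the row \emph{sums} $|\mu^1|,\dots,|\mu^{N-1}|$, not on the full vector $\mu^{N-1}$; independence from the sums does not a priori yield independence from $\mu^{N-1}$ (for the GUE--corners process it does, because the increment is the matrix entry $M_{NN}$, but that is part of what must be proved, not an input). Second, even if you knew the conditional law of $|\mu^N|$ given $\mu^{N-1}$, recovering the $N$-variable density $f_N$ from integrals of $f_N/V_N$ over the slices $\{\lambda\succ\mu,\,|\lambda|=s\}$ is a Radon-type inversion over the interlacing cone --- genuinely multidimensional, not one-dimensional. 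The paper sidesteps all of this with the one idea your plan is missing: realize $\rho$ as the corners process of a $U(N)$-invariant random Hermitian matrix $M$, namely the $\rho_N$-mixture $\Theta_\rho$ of orbital measures $\OO_\lambda$, where the Gibbs property corresponds exactly to the uniformity of corner eigenvalues of orbital measures (Gelfand--Naimark, Baryshnikov, Neretin). Then the increments become the diagonal entries $M_{kk}=|\nu^k|-|\nu^{k-1}|$, and unitary invariance forces the characteristic function to satisfy $\E\exp\big(\i\,{\rm Trace}(AM)\big)=\E\exp\big(\i\sum_k a_k M_{kk}\big)$ for $A$ with eigenvalues $(a_k)$, i.e.\ the joint law of the diagonal determines the whole invariant law. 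With the $M_{kk}$ i.i.d.\ standard Gaussian this gives the GUE characteristic function in one line, converting the step you flag as ``the real technical work'' into a short computation.
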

\begin{proof}
 Let $\H(N)$ denote the set of $N\times N$ Hermitian
 matrices and let $U(N)$ denote the group of all $N\times N$ unitary
 matrices. Note that $U(N)$ acts on $\H(N)$ by conjugations and this action
 preserves eigenvalues of Hermitian matrices. Take any
 $\lambda\in\GTC_N$, let $X(\lambda)$ denote the diagonal matrix
 with eigenvalues $\lambda_1,\dots,\lambda_N$ and let $\O_\lambda$ denote
 the $U(N)$--orbit of $X(\lambda)$. Further, let $\OO_\lambda$ denote the
 \emph{orbital measure} on $\O_\lambda$, which is the pushforward of
 the (normalized) Haar measure on $U(N)$ with respect to the map
 $$
  U(N)\to \H(N),\quad u\mapsto  u X(\lambda) u^{-1}.
 $$
 Equivalently, if we view $\H(N)$ as the real Euclidian space of
 dimension $N^2$ equipped with norm $\|X\|^2={\rm Trace} (X^2)$, then
 $\OO_\lambda$ is merely a uniform measure on the orbit
 $\O_\lambda$.

 \smallskip

 Now let $\mu^1\prec\mu^2\prec\dots\prec\mu^N$ be distributed
 according to $\rho$ and let $\rho_N$ denote the measure on $\GTC_N$
 which is the projection of $\rho$ on $\mu^N$.

 Further let $\Theta_\rho$ denote the $U(N)$--invariant measure on
 $\H(N)$ which is $\rho_N$ mixture of the orbital measures
 $\OO_\lambda$. In other words, for any Borel set $\mathcal A\subset \H(N)$ we
 set
 $$
  \Theta_\rho(\mathcal A)=\int_{\GTC_N} \OO_\lambda(A) \rho_N(d\lambda).
 $$
 Suppose that $M=\{M_{ij}\}_{i,j=1}^N$ is a random
 $\Theta_\rho$--distributed Hermitian matrix. Define
 $\nu^k\in\GTC_k$, $k=1,\dots,N$, to be the eigenvalues of top--left
 $ k\times k$ corner of $M$, i.e.\ of $\{M_{ij}\}_{i,j=1}^k$.
 Straightforward linear algebra shows that
 $$
  \nu^1\prec\nu^2\prec\dots\prec\nu^N.
 $$
 We claim that the distribution of the vector $(\nu^k)$, $1\le k\le
 N$ is the same as that of $(\mu^k)$, $1\le k\le N$. Indeed, the
 distributions of $\mu^N$ and $\nu^N$ coincide by the construction.
 The conditional distribution of $(\mu^k)$, $1\le k \le N-1$ given
 $\mu^N$ is uniform, since $\rho$ is a Gibbs measure. The
 distribution of $(\nu^k)$, $1\le k \le N-1$ given
 $\nu^N$ is also uniform, which is a known property of orbital
 measures $\OO_\lambda$, see \cite{GN}, \cite[Proposition 4.7]{Bar}, \cite[Proposition 1.1]{Ner}.

\bigskip
 Now it remains to prove that $\Theta_\rho$ is GUE--distribution,
 i.e.\ its density with respect to Lebesgue measure is proportional to $\exp\big(-{\rm
 Trace}( X^2/2)\big)$. This is what we do in the rest of the proof.

Note that for $1\le k\le N$ we have
$$
M_{kk}={\rm Trace}\left(\{M_{ij}\}_{i,j=1}^k\right)-{\rm
Trace}\left(\{M_{ij}\}_{i,j=1}^{k-1}\right)=|\nu^k|-|\nu^{k-1}|.
$$
Therefore, $M_{kk}$ are i.i.d.\ standard Gaussians.

Further, the distribution of $M$ is uniquely defined by its Fourier transform $\phi$ (i.e.\
characteristic function), which is
$$
 \phi: \H(N)\to \mathbb C,\quad \phi(A)=\E\left( \exp(\i\cdot {\rm
 Trace}(AM))\right).
$$
Suppose that a $N\times N$ Hermitian matrix $A$ has eigenvalues $a_1\le a_2\le \dots\le a_N$ and
let ${\rm diag}(A)$ denote the diagonal matrix with the same eigenvalues, i.e.\ ${\rm
diag}(A)_{ij}=\delta_{ij}a_i$, $1\le i,j,\le N$. There exists $u\in U(N)$ such that $A=u {\rm
diag}(A) u^{-1}$. Using $U(N)$--invariance of the distribution of $M$ and the fact that ${\rm
Trace}(uBu^{-1})={\rm Trace}(B)$ for any matrix $B$, we get
\begin{multline}
\label{eq_Fourier_computation} \E\exp \Big(\i {\rm
 Trace}(AM)\Big)=\E\exp\Big(\i {\rm
 Trace}(u {\rm diag}(A) u^{-1}\cdot u M u^{-1})\Big)\\= \E\exp\Big(\i {\rm
 Trace}({\rm diag}(A) M)\Big)=\E\exp\left(\i\sum_{i=1}^N a_i
 M_{ii}\right)=\prod_{i=1}^N \exp\left(-\frac{(a_i)^2}{2}\right),
\end{multline}
where the last equality is the computation of the Fourier transform of the Gaussian distribution.
It remains to note that for the GUE--distribution,
 the Fourier transform is the same as the one given by \eqref{eq_Fourier_computation}.
\end{proof}

%
%
%
%
%
%
%
%

\section{Tightness}

\label{Section_tightness}

The aim of this section is to prove the following tightness
statement.

\begin{proposition}
\label{proposition_tightness}
 For $N=1,2,\dots$, let $\xi(N)=(\xi(N)^1\prec\xi(N)^2\dots\prec\xi(N)^N)$ be the uniformly random element of
 $\GT^{(N)}_{(1<2<\dots<N)}$. Then for any $k\ge 1$ the sequence of random variables $N^{-1/2}\big(\xi(N)^k-N/2\big)$,
 $N=1,2,\dots$ is tight (here $\xi(N)^k$ is the index, not power).
\end{proposition}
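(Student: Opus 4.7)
The plan is to induct on $k$, using the joint tightness of the row sums $|\xi(N)^j|$ (for $1\le j\le k$) as the main analytic input. This follows from Theorem~\ref{Theorem_ASM_GP} by telescoping: since each increment $\Psi_j = |\xi(N)^j|-|\xi(N)^{j-1}|$ equals $N/2+O_p(\sqrt N)$, we obtain $|\xi(N)^j| = jN/2+O_p(\sqrt N)$ jointly in $j$. The base case $k=1$ is immediate, since $\xi(N)^1 = |\xi(N)^1|$; and for $k$ fixed, tightness of the $k$-dimensional vector reduces to tightness of each coordinate.

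For the inductive step, assume every entry of $\xi(N)^{k-1}$ lies within $O_p(\sqrt N)$ of $N/2$. The interlacing $\xi(N)^{k-1}\prec\xi(N)^k$ reads
\[
 \xi(N)^k_1\le \xi(N)^{k-1}_1\le \xi(N)^k_2\le \xi(N)^{k-1}_2\le \cdots \le \xi(N)^{k-1}_{k-1}\le \xi(N)^k_k,
\]
so every entry $\xi(N)^k_i$ with $2\le i\le k-1$ is sandwiched between two tight entries of $\xi(N)^{k-1}$ and automatically inherits tightness on scale $\sqrt N$ around $N/2$. Interlacing further yields the one-sided bounds $\xi(N)^k_1\le N/2+O_p(\sqrt N)$ and $\xi(N)^k_k\ge N/2-O_p(\sqrt N)$, and combined with $|\xi(N)^k| = kN/2 + O_p(\sqrt N)$ this pins down the sum $\xi(N)^k_1 + \xi(N)^k_k = N + O_p(\sqrt N)$.

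The main obstacle is to supply the two missing one-sided bounds: a lower bound on $\xi(N)^k_1$ and an upper bound on $\xi(N)^k_k$, each on the $\sqrt N$-scale around $N/2$. By the vertical reflection symmetry of the uniform measure on ASMs, which induces $\xi(N)^k_i\stackrel{d}{=}N+1-\xi(N)^k_{k+1-i}$, these two statements are equivalent, so it suffices to bound, say, $\xi(N)^k_k$ from above. My plan is to attack this via the marginal distribution of $\xi(N)^k$: by the Gibbs property of Definition~\ref{def_discrete_gibbs},
\[
 \mathrm{Prob}\bigl(\xi(N)^k=\lambda\bigr) \;\propto\; A(\lambda)\,B(\lambda),
\]
where $A(\lambda)$ counts half-strict Gelfand--Tsetlin patterns of size $k$ with top row $\lambda$ and $B(\lambda)$ counts interpolations from $\lambda$ up to the fixed top row $(1,2,\dots,N)$. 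Using the Mills--Robbins--Rumsey polynomial formula for $A$ together with an analogous expression for $B$, I would quantify how fast the product $A(\lambda)B(\lambda)$ decays once $\lambda_k$ leaves a $C\sqrt N$-window around $N/2$; combined with the polynomial volume of such $\lambda$, this yields the desired upper-tail bound on $\xi(N)^k_k$. Making this decay quantitative is the hard step. A softer fallback would be to exploit joint tightness of several $|\xi(N)^{k+\ell}|$ together with the iterated interlacing inequalities $\xi(N)^k_k\le \xi(N)^{k+\ell}_{k+\ell}$ in the hope that enough additional constraints force $\xi(N)^k_k$ to be close (with high probability) to the tight random variable $\Psi_k$.
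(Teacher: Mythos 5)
Your reductions are sound as far as they go: telescoping Theorem~\ref{Theorem_ASM_GP} to get $|\xi(N)^j|=jN/2+O_p(\sqrt N)$ jointly, sandwiching the interior entries $\xi(N)^k_i$, $2\le i\le k-1$, between entries of the (inductively tight) level $k-1$, and combining the one-sided interlacing bounds with the sum constraint $\xi(N)^k_1+\xi(N)^k_k=N+O_p(\sqrt N)$. This correctly isolates the one scenario your partial results cannot exclude: a symmetric fan-out of the extreme entries, $\xi(N)^k_1\approx N/2-L$ and $\xi(N)^k_k\approx N/2+L$ with $L\gg\sqrt N$, which is exactly case~(III) in the paper's own case analysis (your other steps reproduce cases~(I) and~(II)). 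But at this point the proposal stops being a proof. The enumeration route is not viable as sketched: $A(\lambda)$ does have a polynomial formula (due to Fischer, for monotone triangles with prescribed bottom row, rather than Mills--Robbins--Rumsey), but no usable formula or asymptotics is known for the trapezoid count $B(\lambda)$ --- the absence of such determinantal or product structure for uniform ASMs is precisely the difficulty the paper points out in its introduction --- so ``quantify how fast $A(\lambda)B(\lambda)$ decays'' is an unsupported leap, close in difficulty to the open edge asymptotics of the limit shape. Your fallback fails for a structural reason: interlacing gives $\xi(N)^k_k\le \xi(N)^{k+\ell}_{k+\ell}$, i.e., every quantity above $\xi(N)^k_k$ in the chain is \emph{larger}, so these inequalities can never yield the needed upper bound, and tightness of the sums $|\xi(N)^{k+\ell}|$ is blind to a symmetric spread because it cancels in the sum.

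The missing idea is to use the Gibbs property \emph{conditionally and downward}, not merely to write the marginal of $\xi(N)^k$. Conditioned on $\xi(N)^k=\lambda$, the lower levels are uniform on $\GT^{(k)}_\lambda$, and the paper's Lemma~\ref{lemma_tightness} proves, by an elementary induction on the number of levels, an anti-concentration statement for this uniform measure: if the spread is $\lambda_k-\lambda_1=L$, then for every $c$ one has $\mathrm{Prob}\bigl(|\mu^1_1-c|>L/(2\,k!)\bigr)\ge 2^{-k-1}$. Consequently, if the spread at level $k$ exceeded $L_i\gg\sqrt{N_i}$ with probability at least $p$, the bottom entry $\xi(N_i)^1_1$ would deviate from $N_i/2$ by order $L_i/k!\gg\sqrt{N_i}$ with probability at least $p\,2^{-k-1}$, contradicting the Gaussian convergence of $\xi(N)^1_1$ from Theorem~\ref{Theorem_ASM_GP} --- the same one-dimensional input you already use for your base case. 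So your argument is repairable: keep your reductions, discard both proposed attacks on the spread case, and substitute this conditional anti-concentration lemma.
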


The proof of Proposition \ref{proposition_tightness} is based on the
following Lemma.

\begin{lemma}
\label{lemma_tightness}
 Fix $N>0$ and take a large enough positive number $L$.
 Let $\lambda\in\GT_N$ be such that
 $\lambda_N-\lambda_1=L$. Further suppose that
 $\mu^1\prec\dots\prec\mu^N$ is distributed according to the uniform measure on
 $\GT^{(N)}_\lambda$. Then for any $c\in\mathbb R$, we have
 \begin{equation}
 \label{eq_probability_estimate}
  {\rm Prob}\left( |\mu^1_1-c|>\frac{L}{2 N!}\right) \ge
  2^{-N-1},
 \end{equation}
\end{lemma}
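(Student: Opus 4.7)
My plan is to induct on $N$, using the Gibbs/Markov property of the uniform measure that was built into Definition~\ref{def_discrete_gibbs}: conditional on $\mu^{N-1}$, the sub-pattern $(\mu^1,\dots,\mu^{N-2})$ is uniformly distributed on $\GT^{(N-1)}_{\mu^{N-1}}$. This lets me trade one level of the pattern at a time for a factor of $1/N$ in the width and a factor of $1/2$ in the probability, matching the $L/(2\,N!)$ and $2^{-N-1}$ in the statement.

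For the base case $N=2$, the variable $\mu^1_1$ is uniform on the $L+1$ integers in $[\lambda_1,\lambda_2]$, so any interval of length $L/4$ catches at most $\lceil L/2\rceil+1$ of them and
\[
P\bigl(|\mu^1_1-c|>L/4\bigr)\ \ge\ 1-\frac{\lceil L/2\rceil+1}{L+1}\ \ge\ 2^{-3}
\]
once $L$ is large. This is exactly the required $2^{-N-1}$ at $N=2$.

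For the inductive step, set $L':=\mu^{N-1}_{N-1}-\mu^{N-1}_1$. By the Gibbs property and the inductive hypothesis applied with $\mu^{N-1}$ as the top row (once $L'$ is large enough),
\[
P\bigl(|\mu^1_1-c|>L'/(2(N-1)!)\ \big|\ \mu^{N-1}\bigr)\ \ge\ 2^{-N}.
\]
On the event $\{L'\ge L/N\}$ we have $L'/(2(N-1)!)\ge L/(2\,N!)$, hence the conditional probability of $\{|\mu^1_1-c|>L/(2\,N!)\}$ is still $\ge 2^{-N}$ on that event. Thus it is enough to establish the auxiliary bound
\[
P(L'\ge L/N)\ \ge\ 1/2,
\]
which, combined with the above, produces $P(|\mu^1_1-c|>L/(2\,N!))\ge 2^{-N}\cdot\tfrac12=2^{-N-1}$.

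The main obstacle is precisely this auxiliary bound on the ``width'' at the second level from the top. Under the uniform measure on $\GT^{(N)}_\lambda$, the marginal distribution of $\mu^{N-1}$ assigns mass at $\nu\prec\lambda$ proportional to $|\GT^{(N-1)}_\nu|$, which is a Weyl-dimension-type polynomial of degree $\binom{N-1}{2}$ in the entries of $\nu$ and is dominated by $\prod_{i<j}(\nu_j-\nu_i)$. This weight strongly favors spread-out $\nu$, pushing $\nu_1$ towards $\lambda_1$ and $\nu_{N-1}$ towards $\lambda_N$. Controlling the total weight on $\{\nu\prec\lambda:\ \nu_{N-1}-\nu_1<L/N\}$ and comparing to the total mass should yield the $1/2$ bound for $L$ large enough; this polynomial estimate, and checking that the loss when $L'<L/N$ is indeed majorized, is the technical heart of the lemma.
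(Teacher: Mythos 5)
Your inductive frame (condition on $\mu^{N-1}$, use the Gibbs property to make the lower levels uniform on $\GT^{(N-1)}_{\mu^{N-1}}$, and trade $L$ for the random width $L'=\mu^{N-1}_{N-1}-\mu^{N-1}_1$) is sound, the base case $N=2$ is fine, and the bookkeeping with $L/(2\,N!)$ and $2^{-N-1}$ is consistent. But the proof has a genuine hole exactly where you flag it: the auxiliary bound ${\rm Prob}(L'\ge L/N)\ge 1/2$ is asserted, not proved, and the sketch offered for it does not go through. There is no product formula for $|\GT^{(N-1)}_\nu|$ --- counting half-strict patterns (monotone triangles) with prescribed top row is precisely the hard enumeration underlying ASM numbers, with only Fischer-type operator formulas available --- and your domination claim is false as stated: for $\nu=(1,2,3)$ one has $|\GT^{(3)}_\nu|=7$ (the number of $3\times 3$ ASMs, by Lemma \ref{lemma_ASM_and_pattern}), while $\prod_{i<j}(\nu_j-\nu_i)=2$. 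Moreover, even granting polynomial growth of the right degree, converting ``spread-out $\nu$ is favored'' into a bound uniform over \emph{all} $\lambda$ with $\lambda_N-\lambda_1=L$ (the large gap of $\lambda$ may sit at an edge, which is the delicate configuration) requires a real monotonicity or counting argument; that argument is the entire content of the lemma, so as written the proposal reduces the lemma to an unproved statement of comparable difficulty.

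For comparison, the paper's induction sidesteps your auxiliary bound by a case split on where $\lambda$ has a gap of size at least $L/N$. If such a gap occurs in the middle ($1<i<N-1$), interlacing forces $L'\ge L/N$ \emph{deterministically}, and the induction hypothesis applies with no probabilistic input at all. If the only large gap is at an edge (say $\lambda_N-\lambda_{N-1}\ge L/N$), the paper conditions on the sub-leading entries $\mu^j_{j-1}$, after which the diagonal $\mu^1_1\le\mu^2_2\le\dots\le\mu^{N-1}_{N-1}$ is uniform on the explicit set \eqref{eq_domain}, \eqref{eq_domain2}; the monotonicity \eqref{eq_monoton} of the counting function $S$ then shows each diagonal entry falls in the upper half of its conditional range with probability at least $1/2$, and iterating produces two events, $\{\mu^1_1\le A_1+(B-A_1)2^{-N}\}$ and $\{\mu^1_1\ge A_1+(B-A_1)2^{-(N-1)}\}$, each of probability at least $2^{-N-1}$ and separated by a distance of order $L$ --- which gives the anti-concentration simultaneously for every $c$. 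If you want to rescue your route, a monotonicity statement of this flavor (the count $|\GT^{(N-1)}_\nu|$ does not decrease when an edge coordinate of $\nu$ is moved outward) is the correct substitute for the Vandermonde heuristic, and proving your ${\rm Prob}(L'\ge L/N)\ge 1/2$ would in effect reproduce the paper's argument in a harder form.
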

Let us first use Lemma \ref{lemma_tightness} to prove Proposition \ref{proposition_tightness}.

\begin{proof}[Proof of Proposition \ref{proposition_tightness}]
 We argue by the contradiction.

 Suppose that random variables $N^{-1/2}\big(\xi(N)^k-N/2\big)$,
 $N=1,2,\dots$ are not tight as $N\to\infty$. Since any family of \emph{bounded} random variables
 on $\mathbb R^k$ is tight, this would imply that there exist a positive number $p>0$, a sequence
 of integers $N_1<N_2<N_3<\dots$ and a growing to $+\infty$ sequence $L_i$,
 $i=1,2,\dots$, such that
 $$
  {\rm Prob}\left( \sup_{j=1,\dots,k} \left|N_i^{-1/2}\big(\xi(N_i)^k_j-N_i/2\big)\right| > L_i\right) >p
 $$
 for every $i=1,2,3,\dots$. Since $\xi(N_i)^k_1< \xi(N_i)^k_2<\dots<\xi(N_i)^k_k$, one of the following three inequalities
 should then hold for infinitely many $i$s
 \begin{enumerate}
\item[(I)]  ${\rm Prob}\left(  N_i^{-1/2}\big(\xi(N_i)^k_1-N_i/2\big) >
L_i/2\right) >p/3$,
\item[(II)]  ${\rm Prob}\left(  N_i^{-1/2}\big(\xi(N_i)^k_k-N_i/2\big) <
 -L_i/2\right) >p/3$,
\item[(III)]  ${\rm Prob}\left(  N_i^{-1/2}\big(\xi(N_i)^k_k-\xi(N_i)^k_1\big) >
 L_i/2\right) >p/3$.
 \end{enumerate}
In case (I), due to interlacing conditions, ${\rm Prob}\left(
N_i^{-1/2}\big(\xi(N_i)^1_1-N_i/2\big) > L_i/2\right) >p/3$, which contradicts the convergence of
$N_i^{-1/2}\big(\xi(N_i)^1_1-N_i/2\big)$ to a Gaussian random variable, which is proved in Theorem
\ref{Theorem_ASM_GP}. Similarly, in case (II), ${\rm Prob}\left(
N_i^{-1/2}\big(\xi(N_i)^1_1-N_i/2\big) < -L_i/2\right) >p/3$, which again contradicts Theorem
\ref{Theorem_ASM_GP}.

In case (III) we note that the conditional distribution of $\xi(N_i)^a_b$, $b=1,\dots,a$,
$a=1,\dots,k-1$ given $\xi(N_i)^k=\lambda$ is the uniform measure on the set $\GT^{(N)}_\lambda$.
Then we can use Lemma \ref{lemma_tightness} and conclude that
$$
 {\rm Prob}\left(N_i^{-1/2}\big|\xi(N_i)^1_1-N_i/2\big| > \frac{L_i}{4k!} \right)\ge
 \frac{p}{3\cdot 2^{k+1}},
$$
which yet again contradicts Theorem \ref{Theorem_ASM_GP}.
\end{proof}

\begin{proof}[Proof of Lemma \ref{lemma_tightness}]
Induction in $N$.

First, suppose that $\lambda_{i+1}-\lambda_i\ge L/N$ for some $1<i<N-1$. Then the interlacing
condition $\mu^{N-1}\prec\mu^N=\lambda$ implies that (almost surely)
$\mu^{N-1}_{N-1}-\mu^{N-1}_1\ge L/N$. Then we can use the induction assumption which yields the
inequality \eqref{eq_probability_estimate}.

If $\lambda_{i+1}-\lambda_i<L/N$ for all $1<i<N-1$, then either
$\lambda_2-\lambda_1\ge L/N$ or $\lambda_N-\lambda_{N-1}\ge L/N$.
Without loss of generality we assume the latter.

Let us fix the values of $\mu^j_{j-1}$, $j=2,\dots,{N-1}$:
\begin{equation}
 \label{eq_conditioning}
 \mu^2_1=A_1,\quad \mu^3_2=A_2,\quad\dots, \mu^{N-1}_{N-2}=A_{N-2}
\end{equation}
Clearly, if we prove the inequality \eqref{eq_probability_estimate}
conditional on \eqref{eq_conditioning}, then the same inequality
would hold without conditioning.

Set also $\lambda_{N-1}=A_{N-1}$, $\lambda_N=B$. Note that
$$
 A_1\le A_2\le\dots\le A_{N-1} < B.
$$
 Now the distribution of $\mu^1_1$, $\mu^2_2$,\dots, $\mu^{N-1}_{N-1}$ is uniform on the set
defined by inequalities
\begin{equation}
\label{eq_domain}
 \mu^1_1\le\mu^2_2\le \dots\le \mu^{N-1}_{N-1}\le B,\quad \quad
 \mu^i_i \ge A_i,\, i=1,\dots,N-1
\end{equation}
 and also
\begin{equation}
 \mu_i^i> A_{i-1},\quad i=1,\dots,N-1.
\label{eq_domain2}.
\end{equation}
Note that when the numbers $A_i$ are distinct, then the inequalities \eqref{eq_domain2} are
automatically implied by \eqref{eq_domain}. On the other hand, if $A_i=A_{i+1}=\dots=A_{i+m}$,
then the inequalities for $\mu^{i+1}_{i+1}$,\dots, $\mu^{i+m}_{i+m}$ in \eqref{eq_domain} become
strict. Graphically, we can view the solutions to inequalities \eqref{eq_domain},
\eqref{eq_domain2} as $N-1$ points in $N-1$--rows of a Young diagram, as shown in Figure
\ref{Fig_inequalities}.
\begin{figure}[h]
\begin{center}
 \scalebox{0.8}{\includegraphics{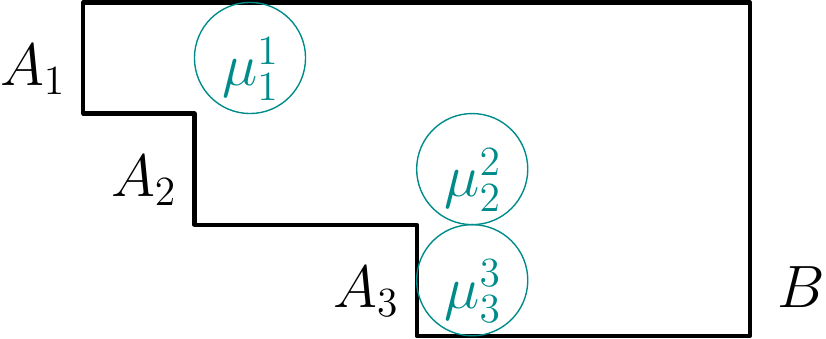}}
\end{center}
\caption{One of the solutions to inequalities \eqref{eq_domain}, \eqref{eq_domain2} represented as
points inside Young diagram. Here $N=4$\label{Fig_inequalities}}
\end{figure}
 From now on we assume that
all $A_i$ are distinct, the case of equal $A_i$s can be studied in the same way.

 Let $S(N-1;
A_1,\dots,A_{N-1}; B)$ denote the number of $(N-1)$--tuples ($\mu^1_1\le\mu^2_2\le \dots\le
\mu^{N-1}_{N-1}$) solving \eqref{eq_domain}, \eqref{eq_domain2}. The definition readily implies the
following monotonicity: if $A'_i\le A_i$, $i=1,\dots,N-1$ and $B'\ge B$, then
\begin{equation}
\label{eq_monoton} S(N-1; A_1,\dots,A_{N-1}; B)\le S(N-1; A'_1,\dots,A'_{N-1}; B'). \end{equation}
 Let us prove two
estimates:
 \begin{equation} \label{eq_est1}{\rm Prob}\left(\mu^1_1\le A_1 +\frac{B-A_1}{2^{N}}\right)\ge 2^{-N-1},\end{equation}
\begin{equation} \label{eq_est2}{\rm Prob}\left(\mu^1_1\ge A_1+ \frac{B-A_1}{2^{N-1}}\right)\ge 2^{-N-1}.\end{equation}
These two estimates together with observation that $B-A_1\ge B-A_{N-1}= L$ readily imply
\eqref{eq_probability_estimate}.

To prove \eqref{eq_est1} note that conditionally on $\mu^2_2,\dots,\mu^{N-1}_{N-1}$ the
distribution of $\mu_1^1$ (which arises from the uniform measure on the set defined by
inequalities \eqref{eq_domain}, \eqref{eq_domain2}) is uniform on the interval
$\{A_1,A_1+1,\dots,\mu^2_2\}$. Since $\mu^2_2\le B$, the desired inequality immediately follows.

To prove \eqref{eq_est2}, observe, first, that the distribution of $\mu^{N-1}_{N-1}$ is given by
\begin{equation}
\label{eq_proba}
 {\rm Prob}\left(\mu^{N-1}_{N-1}=k\right) = \frac{ S(N-2; A_1,\dots,A_{N-2};
 k)}{S(N-1;A_1,\dots,A_{N-1};B)},\quad k=A_{N-1}, A_{N-1}+1,\dots, B.
\end{equation}
The monotonicity property \eqref{eq_monoton} implies that the probability \eqref{eq_proba} is an
increasing function of $k$. Therefore,
\begin{equation}
\label{eq_x1}
 {\rm Prob}\left(\mu^{N-1}_{N-1}\ge \frac{A_{N-1}+B}2 \right) \ge \frac{1}{2}.
\end{equation}
Similarly studying the conditional distribution of $\mu^{N-2}_{N-2}$ given that
$\mu^{N-1}_{N-1}=k$, we get
\begin{equation}
\label{eq_x2}
 {\rm Prob}\left(\mu^{N-2}_{N-2}\ge \frac{A_{N-2}+k}2\,\Big |
 \, \mu^{N-1}_{N-1}=k\right) \ge\frac{1}{2}
\end{equation}
Combining \eqref{eq_x1} and \eqref{eq_x2} we conclude that
\begin{equation}
\label{eq_x3}
 {\rm Prob}\left(\mu^{N-2}_{N-2}\ge \frac{3A_{N-2}+B}4\right) \ge\frac{1}{2^2}.
\end{equation}
Further studying in the same way the conditional distribution of $\mu^{N-3}_{N-3}$ given
$\mu^{N-2}_{N-2}$ and $\mu^{N-1}_{N-1}$ and combing with \eqref{eq_x3} we get
\begin{equation}
 {\rm Prob}\left(\mu^{N-3}_{N-3}\ge \frac{7A_{N-3}+B}8\right) \ge\frac{1}{8}.
\end{equation}
Continuing this process, we finally get the inequality
\begin{equation}
 {\rm Prob}\left(\mu^{1}_{1}\ge \frac{(2^{N-1}-1)A_{1}+B}{2^{N-1}}\right) \ge 2^{1-N},
\end{equation}
which is \eqref{eq_est2}.
\end{proof}

\section{Proof of Theorem \ref{theorem_ASM}}

\label{Section_proof}

Proposition \ref{proposition_tightness} yields that centered and rescaled random variables
$\xi(N)^a_b$, $a=1,\dots,k$, $b=1,\dots a$ are tight as $N\to\infty$. Let $\zeta^a_b$ denote any
subsequential limit of the random vectors
\begin{equation}
\label{eq_rescaled_2}
  \sqrt{\frac{8}{3N}}\Big(\xi(N)^a_b-N/2\Big), \quad a=1,\dots,k,\quad b=1,\dots a.
\end{equation}
 Since the distribution of $\xi(N)^a_b$ for any $N$ satisfies the Gibbs property on $\GT^{(k)}$,
the distribution of $\zeta^a_b$ satisfies the (continuous) Gibbs property on $\GTC^{(k)}$. Now
combination of Proposition \ref{proposition_characterization_of_GUE} and Theorem
\ref{Theorem_ASM_GP} yields that the distribution of $\zeta$ is the GUE--corners process. Since
all the subsequential limits are the same, we conclude that \eqref{eq_rescaled_2} weakly converges
to the GUE--corners process.

In particular, this implies that with probability tending to $1$ all the coordinates of random
vector $\xi(N)^a_b$ become distinct as $N\to\infty$. This yields part 1 of Theorem
\ref{theorem_ASM}. Further, when the coordinates $\xi(N)^a_b$ are distinct, then
$\xi(N)^a_b=\eta(N)^a_b$, which finishes the proof of Theorem \ref{theorem_ASM}.


\begin{thebibliography}{EKLP}

\bibitem[AR]{AR} D.~Allison, N.~Reshetikhin, Numerical study of the 6--vertex model with
domain wall boundary conditions, Annales de l'institut Fourier, 55, no.\ 6 (2005), 1847--1869,
arXiv:cond-mat/0502314.

\bibitem[Bar]{Bar} Y.~Baryshnikov, GUEs and queues, Probability Theory and Related Fields, 119, no.\ 2 (2001),
256--274.

\bibitem[BG]{BG} A.~Borodin, V.~Gorin, Lectures on integrable probability,  arXiv:1212.3351

\bibitem[BGR]{BGR} A.~Borodin, V.~Gorin, E.~Rains, $q$-Distributions on boxed plane partitions. Selecta
Mathematica, New Series, 16 (2010), no.\ 4, 731--789, arXiv:0905.0679.

\bibitem[Bax]{Bax} R.~J.~Baxter, Exactly Solved Models in Statistical Mechanics, The Dover Edition, Dover, 2007.


\bibitem[BFZ]{BFZ} R.~E.~Behrend, P.~Di~Francesco, P.~Zinn--Justin, On the weighted enumeration of
Alternating Sign Matrices and Descending Plane Partitions, J. of Comb. Theory, Ser. A, 119, no.\ 2
(2012), 331--363. arXiv:1103.1176.

\bibitem[BP]{BP} D.~Bressoud, J.~Propp, How the Alternating
Sign Matrix Conjecture Was Solved, Notices of the American Mathematical Society, 46 (1999),
637--646.


\bibitem[CLP]{CLP} H.~Cohn, M.~Larsen, J.~Propp, The Shape of a Typical Boxed Plane Partition,
New York J. Math., 4 (1998), 137--165.  arXiv:math/9801059.

\bibitem[CP]{CP}  F.~Colomo, A.~G.~Pronko, The limit shape of large alternating sign matrices,
SIAM J.\ Discrete Math. 24 (2010), 1558-1571, arXiv:0803.2697.

\bibitem[CP2]{CP2} F.~Colomo, A.~G.~Pronko, The arctic curve of the domain-wall six-vertex model,
J. Stat. Phys. 138 (2010), 662--700.  arXiv:0907.1264.

\bibitem[CPZ]{CPZ}   F.~Colomo, A.~G.~Pronko, P.~Zinn-Justin,
The arctic curve of the domain wall six-vertex model in its antiferroelectric regime, J. Stat.
Mech. Theory Exp. (2010), no.\ 3.  arXiv:1001.2189.


\bibitem[EKLP]{EKLP} N.~Elkies, G.~Kuperberg,  M.~Larsen, J.~Propp, Alternating-sign matrices and domino tilings. I,II,
J. Algebraic Combin. 1 (1992), no. 2, 111--132; no. 3, 219-234. arXiv:math/9201305.



\bibitem[FS]{FS} P.~Ferrari, H.~Spohn, Domino tilings and the six-vertex model at its free fermion
point,  J. Phys. A: Math. Gen. 39 (2006), 10297--10306,  arXiv:cond-mat/0605406.


\bibitem[Gi]{Gier} J.~de~Gier, Fully packed loop models on finite geometries, Polygons, polyominoes and polycubes, Lecture
Notes in Physics, vol. 775, 2009, arXiv:0901.3963.

\bibitem[GS]{GS} V.~Gorin, M.~Shkolnikov, Limits of Multilevel TASEP and similar processes, to
appear in Annales de l'Institut Henri Poincar\'{e} (B) Probabilit\'{e}s et Statistiques,
arXiv:1206.3817.


\bibitem[GN]{GN} I.~M.~Gelfand, M.~A.~Naimark,
Unitary representations of the classical groups, Trudy Mat.~Inst.~Steklov, Leningrad, Moscow
(1950) (in Russian). (German transl.: Academie-Verlag, Berlin, 1957.)

\bibitem[GP]{GP} V.~Gorin, G.~Panova, Asymptotics of symmetric polynomials with applications to statistical mechanics and representation
theory, arXiv:1301.0634.

\bibitem[I]{I} A.~Izergin, Partition function of the six-vertex model in a finite volume, Sov. Phys.
Dokl. 32 (1987) 878--879.


\bibitem[J]{J_nonintersecting} K.~Johansson,  Non-intersecting Paths, Random Tilings and
Random Matrices.  Probab. Theory and Related Fields, 123 (2002), no. 2, 225--280,
arXiv:math/0011250.


\bibitem[JN]{JN} K.~Johansson and E.~Nordenstam. Eigenvalues of GUE minors. Electron. J. Probab.,
11, no. 50, 1342--1371 (electronic), 2006,  arXiv:math/0606760.



\bibitem[Ke]{Kenyon} R.~Kenyon, Lectures on dimers, IAS/Park City Mathematical Series, vol.\
16: Statistical Mechanics, AMS, 2009. arXiv:0910.3129.

\bibitem[KOS]{KOS} R.~Kenyon, A.~Okounkov, S.~Sheffield,
Dimers and Amoebae. Ann. Math. 163 (2006), no. 3, 1019--1056. arXiv:math-ph/0311005

\bibitem[Kor]{Kor} V.~Korepin, Calculation of norms of Bethe wave functions, Comm. Math. Phys. 86 (1982) 391--418.

\bibitem[MRR]{MRR} W.~H.~Mills, D.~P.~Robbins, and H.~Rumsey, Alternating sign matrices and
descending plane partitions, Journal of Combinatorial Theory, Series A, 34, no.\ 3 (1983):
340--359.

\bibitem[Ne]{Ner} Yu.~A.~Neretin, Rayleigh triangles and non-matrix interpolation of matrix beta
integrals, Sbornik: Mathematics (2003), 194(4), 515--540.

\bibitem[No]{Nord} E.~Nordenstam. Interlaced particles in tilings and random matrices. Doctoral thesis, KTH, 2009.


\bibitem[OR]{OR} A. Yu. Okounkov, N. Yu. Reshetikhin, The birth
of a random matrix, Mosc. Math. J., 6:3 (2006), 553--566


\bibitem[PR]{PR} K.~Palamarchuk, N.~Reshetikhin, The 6--vertex model with fixed boundary
conditions. Proceedings of Solvay Workshop ``Bethe Ansatz : 75 Years Later''. arXiv:1010.5011

\bibitem[P1]{Petrov-curves} L.~Petrov, Asymptotics of Random Lozenge Tilings via Gelfand-Tsetlin
Schemes, to appear in Probability Theorey and Related Fields, arXiv:1202.3901.

\bibitem[R]{R} N.~Reshetikhin, Lectures on the integrability of the 6-vertex model, in: J.~Jacobsen, S.~Ouvry,
V.~Pasquier, D.~Serban, L.~F.~Cugliandolo (Eds.) Exact Methods in Low-dimensional Statistical
Physics and Quantum Computing, Oxford University Press, Oxford 2010, pp. 197--266.
arXiv:1010.5031.

\bibitem[SZ]{SZ} O.~F.~Syljuasen, M.~B.~Zvonarev, Directed-loop Monte Carlo simulations of
vertex models. Phys. Rev. E 70, 016118 (2004), arXiv:cond-mat/0401491.

\bibitem[Z1]{Z_shapes} P.~Zinn-Justin, The influence of boundary conditions in the six-vertex model, arXiv:cond--mat/0205192

\bibitem[Z]{Z-Thesis} P.~Zinn-Justin, Six-vertex, loop and tiling models: integrability and combinatorics, Lambert Academic
Publishing, 2009, Habilitation thesis, arXiv:0901.0665.

\end{thebibliography}
\end{document}